\begin{document}

\theoremstyle{plain}
\newtheorem{theorem}{Theorem}
\newtheorem{corollary}[theorem]{Corollary}
\newtheorem{lemma}[theorem]{Lemma}
\newtheorem{proposition}[theorem]{Proposition}

\theoremstyle{definition}
\newtheorem{definition}[theorem]{Definition}
\newtheorem{example}[theorem]{Example}
\newtheorem{conjecture}[theorem]{Conjecture}
\newtheorem{question}[theorem]{Question}

\theoremstyle{remark}
\newtheorem{remark}[theorem]{Remark}

\begin{center}
\vskip 1cm{\LARGE\bf 
Some Bounds for Number of Solutions to $ax+by+cz=n$ and their Applications \\ 
\vskip .1in

}
\vskip 1cm
\large
Damanvir Singh Binner \\
Department of Mathematics\\
Simon Fraser University \\
Burnaby, BC V5A 1S6\\
Canada \\
 dbinner@sfu.ca
\end{center}

\vskip .2in

\begin{abstract}
In a recent work, the present author developed an efficient method to find the number of solutions of $ax+by+cz=n$ in non-negative integer triples $(x,y,z)$ where $a,b,c$ and $n$ are given natural numbers. In this note, we use that formula to obtain some simple looking bounds for the number of solutions of $ax+by+cz=n$. Using these bounds, we solve some special cases of a problem related to the generalization of Frobenius coin problem in three variables. Moreover, we use these bounds to disprove a recent conjecture of He, Shiue and Venkat regarding the solution structure of $ax+by+cz=n$.
 \end{abstract}
 
 \section{Introduction}
 
 Let $a,b,c$ and $n$ be given natural numbers such that $\gcd(a,b) = \gcd(b,c) = \gcd(c,a) =1$. We recall the formula for the number of solutions $N(a,b,c;n)$ of $ax+by+cz=n$ in non-negative integer triples $(x,y,z)$ described in \cite[Theorem 5]{Binner}. We restate the formula here. For that, we need to introduce some notation.

\begin{itemize}
	\item Define $ b'_1$ such that  $b'_1 \equiv - nb^{-1}$ (mod $a$)  with $ 1\leq b'_1 \leq a$. Moreover, define $ c'_1$ such that $c'_1 \equiv bc^{-1}$ (mod $a$)  with $ 1\leq c'_1 \leq a$.
	\item Define  $ c'_2$ such that  $c'_2 \equiv - nc^{-1}$ (mod $b$)  with $ 1\leq c'_2 \leq b$. Moreover, define $ a'_2$ such that $a'_2 \equiv ca^{-1}$ (mod $b$)  with $ 1\leq a'_2\leq b$.
	\item Define $ a'_3$ such that $a'_3 \equiv - na^{-1}$ (mod $c$)  with $ 1\leq a'_3 \leq c$. Moreover, define  $b'_3$ such that $b'_3 \equiv ab^{-1}$ (mod $c$)  with $ 1\leq b'_3 \leq c$.
	\item Define $N_1 = n(n + a + b +c) + cbb'_1(a+1-c'_1(b'_1-1)) + acc'_2(b+1 - a'_2(c'_2-1))$ $+ baa'_3 (c+1-b'_3(a'_3-1)) $.
\end{itemize}

\begin{theorem}(B.(2020))
\label{MainThm}
Let $a$, $b$, $c$, and $n$ be given positive integers such that $\gcd(a,b) = \gcd(b,c) = \gcd(c,a) =1$. With the notation above, the number of nonnegative integer solutions of the equation $ax + by +cz = n$ is given by 
$$N(a,b,c;n) =   \frac{N_1}{2abc} + \sum_{i=1}^{b'_1-1} \left\lfloor \frac{ic'_1}{a} \right\rfloor + \sum_{i=1}^{c'_2-1} \left\lfloor \frac{ia'_2}{b} \right\rfloor + \sum_{i=1}^{a'_3-1} \left\lfloor \frac{ib'_3}{c} \right\rfloor  - 2 .$$
\end{theorem}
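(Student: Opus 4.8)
The plan is to collapse the three–variable count to the two–variable one and then sum. Fixing $z$ and writing $m=n-cz$, one has
\[
N(a,b,c;n)=\sum_{z=0}^{\lfloor n/c\rfloor} N_2(a,b;\,n-cz),
\]
where $N_2(a,b;m)$ denotes the number of nonnegative solutions of $ax+by=m$. Because $\gcd(a,b)=1$, the classical Popoviciu formula gives
\[
N_2(a,b;m)=\frac{m}{ab}-\left\{\frac{m\,b^{-1}}{a}\right\}-\left\{\frac{m\,a^{-1}}{b}\right\}+1,
\]
where $b^{-1}$ and $a^{-1}$ are inverses modulo $a$ and $b$, and $\{\cdot\}$ is the fractional part. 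Substituting $m=n-cz$ and splitting the sum into the four resulting pieces is the first step; everything after is the exact evaluation of each piece.

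For the elementary pieces, the constant $+1$ contributes $\lfloor n/c\rfloor+1$, and the arithmetic series $\sum_z (n-cz)/(ab)$ has a closed form. Writing $n=c\lfloor n/c\rfloor+t$ with $t=n\bmod c$, one checks that this series collapses to $\dfrac{n^2+cn+t(c-t)}{2abc}$. The quadratic residue expression $t(c-t)$ is significant: since $t\equiv -a\,a'_3\pmod c$, it is exactly the seed from which the modulus-$c$ sum $\sum_{i=1}^{a'_3-1}\lfloor ib'_3/c\rfloor$ will emerge after a reciprocity step, while the term $\tfrac{n^2}{2abc}$ already supplies the leading part of $N_1/(2abc)$.

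The two fractional–part sums are where the modulus-$a$ and modulus-$b$ floor sums come from. Consider $S_a=\sum_{z=0}^{\lfloor n/c\rfloor}\{(n-cz)b^{-1}/a\}$. The residue $(n-cz)b^{-1}\bmod a$ is an affine function of $z$ whose slope $cb^{-1}$ is invertible modulo $a$ (using $\gcd(c,a)=\gcd(b,a)=1$), so the summand is periodic of period $a$ and sums to $(a-1)/2$ over each complete period, giving a clean linear-in-$n$ contribution that joins the remaining polynomial terms of $N_1/(2abc)$. Expanding the leftover partial period through $\{x\}=x-\lfloor x\rfloor$ produces a floor sum, which a Dedekind–type reciprocity identity (trading a modular inverse $k\leftrightarrow k^{-1}$ and shifting the index range) recasts as $\sum_{i=1}^{b'_1-1}\lfloor ic'_1/a\rfloor$ with $b'_1\equiv -nb^{-1}$ and $c'_1\equiv bc^{-1}\pmod a$, exactly the normalization in the statement; the symmetric computation for $S_b$ yields the modulus-$b$ sum. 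In each such step the reciprocity leaves behind a quadratic-in-residue remainder, and these remainders are precisely the products $b'_1(a+1-c'_1(b'_1-1))$, $\dots$ collected inside $N_1$.

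The hard part will be the bookkeeping that glues these pieces together. The reduction over $z$ is asymmetric, and its boundary data genuinely depend on $\lfloor n/c\rfloor \bmod a$ and $\bmod b$, whereas the target is symmetric with floor sums indexed only by $b'_1,c'_2,a'_3$; reconciling the two forces the reciprocity identities to absorb all of the stray partial–period lengths and to verify that every residual linear and constant term consolidates exactly into $N_1/(2abc)$ together with the universal constant $-2$. A reassuring consistency check is geometric: the solutions are the lattice points of the rational triangle $\{x,y,z\ge 0:\ ax+by+cz=n\}$, whose normalized area $\tfrac{n^2}{2abc}$ reproduces the leading term, while the three floor sums are exactly the Fourier–Dedekind periodic corrections forced by the triangle's non-integral vertices.
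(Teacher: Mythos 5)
This paper does not actually prove Theorem \ref{MainThm}; it imports it verbatim from \cite[Theorem 5]{Binner}, so the only comparison available is with that source, whose argument does begin the way yours does: slice over one variable, apply the two-variable (Popoviciu) count, and evaluate the resulting sums. Your opening steps are correct — the identity $N(a,b,c;n)=\sum_{z=0}^{\lfloor n/c\rfloor}N_2(a,b;n-cz)$, Popoviciu's formula, the contribution $\lfloor n/c\rfloor+1$ from the constant piece, the closed form $\frac{n^2+cn+t(c-t)}{2abc}$ for the arithmetic series, and the observation that each fractional-part summand is periodic with average $\frac{a-1}{2a}$ per period.

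However, what you have written is an outline, and the part you defer is exactly the part that constitutes the theorem. Three concrete gaps. First, the reciprocity identity you invoke to convert the leftover partial-period floor sum into $\sum_{i=1}^{b'_1-1}\lfloor ic'_1/a\rfloor$ is never stated or proved, and it is not routine: the partial-period length is $(\lfloor n/c\rfloor+1)\bmod a$, a function of $n$ modulo $ac$, whereas the target sum is indexed by $b'_1\equiv -nb^{-1}\pmod a$, a function of $n$ modulo $a$ alone; reconciling the two (and showing the residue lands exactly in $N_1$) is the real content. Second, your account of the modulus-$c$ sum cannot be right as stated: $t(c-t)/(2abc)$ is bounded by $c/(8ab)<1$, while $\sum_{i=1}^{a'_3-1}\lfloor ib'_3/c\rfloor$ can be of order $c^2$, and its parameter $b'_3\equiv ab^{-1}\pmod c$ couples $a$ and $b$ modulo $c$ — information that in your decomposition can only emerge from the joint behaviour of the two fractional-part sums over $0\le z\le\lfloor n/c\rfloor$, not from the single term $t(c-t)$ you name as its ``seed.'' Third, the consolidation of all residual linear and constant terms into $N_1/(2abc)-2$ is asserted rather than verified. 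Until the reciprocity lemma is made explicit and the bookkeeping is actually carried out, your argument establishes the shape of the answer (main term plus bounded periodic corrections, which is essentially Theorem \ref{Bounds}) but not the exact formula claimed in Theorem \ref{MainThm}.
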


\section{Bounds for $N(a,b,c;n)$}

  First, we rewrite the expression for $N(a,b,c;n)$ in another convenient form, from which it is easy to deduce some nice bounds for $N(a,b,c;n)$. Recall the definition of the sawtooth function
 
 $$ 
((x)) = 
\begin{cases}
\{x\}-\frac{1}{2}, & \text{if } x \notin \mathbb{Z} \\
0, & \text{if }x \in \mathbb{Z},
\end{cases}
$$

where $\{x\}$ denotes the fractional part of $x$. Note that $-\frac{1}{2} < ((x)) < \frac{1}{2}$ for any $x$. Substituting the value of $N_1$ in the expression for $N(a,b,c;n)$ given in Theorem \ref{MainThm} and simplifying further, we can rewrite $N(a,b,c;n)$ as follows.

\begin{equation}  \label{AnoExp}
\begin{split}
 N(a,b,c;n) &= \frac{n(n+a+b+c)}{2abc} - \frac{1}{2} + \left( \frac{b'_1}{2a} + \frac{c'_2}{2b} + \frac{a'_3}{2c} \right) \\
 &- \sum_{i=1}^{b'_1-1} \left( \left( \frac{ic'_1}{a} \right) \right) - \sum_{i=1}^{c'_2-1} \left( \left( \frac{ia'_2}{b} \right) \right) - \sum_{i=1}^{a'_3-1} \left( \left( \frac{ib'_3}{c} \right) \right).  
 \end{split}
  \end{equation}

 To verify that this expression is equivalent to the one in Theorem \ref{MainThm}, just substitute the sawtooth function $((x))$ with $x-\left \lfloor x \right \rfloor-\frac{1}{2}$ in all the three sums, since the fractions  $\frac{ic'_1}{a}$,  $\frac{ia'_2}{b}$ and  $\frac{ib'_3}{c}$ are never integers for the given values of $i$ in each of the three sums. Thus, substituting the sawtooth functions in terms of floor functions in the summations in \eqref{AnoExp}, and then simplifying the sums gives us the expression for $N(a,b,c;n)$ given in Theorem \ref{MainThm}. Next, we use this expression in \eqref{AnoExp} to obtain some useful bounds for $N(a,b,c;n)$.
 
 \begin{theorem}
 \label{Bounds}
 Let $a$, $b$, $c$, and $n$ be given positive integers such that $\gcd(a,b) = \gcd(b,c) = \gcd(c,a) =1$.  Further let $N(a,b,c;n)$ denote the number of nonnegative integer solutions of the equation $ax + by +cz = n$. Then, $$ \frac{n(n+a+b+c)}{2abc} - \frac{a+b+c}{2} < N(a,b,c;n) < \frac{n(n+a+b+c)}{2abc} + \frac{a+b+c}{2}. $$
 \end{theorem}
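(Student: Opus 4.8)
The plan is to start from the alternative expression \eqref{AnoExp}, isolate the ``main term'' $M := \frac{n(n+a+b+c)}{2abc}$, and bound the remaining error $E := N(a,b,c;n) - M$; the theorem is then exactly the two–sided estimate $-\frac{a+b+c}{2} < E < \frac{a+b+c}{2}$. The only ingredients I expect to need are the defining ranges $1 \le b'_1 \le a$, $1 \le c'_2 \le b$, $1 \le a'_3 \le c$ together with the elementary fact $-\frac12 < ((x)) < \frac12$ for every $x$, so no number-theoretic input beyond \eqref{AnoExp} is required.

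The key organizing idea is to pair each linear term in \eqref{AnoExp} with its own sawtooth sum, writing $E = -\frac12 + T_1 + T_2 + T_3$, where $T_1 = \frac{b'_1}{2a} - \sum_{i=1}^{b'_1-1}\left(\left(\frac{ic'_1}{a}\right)\right)$ and $T_2, T_3$ are defined analogously. For the upper bound I would estimate $T_1$ by noting that its sum has $b'_1 - 1$ terms, each exceeding $-\frac12$, so $-\sum_{i=1}^{b'_1-1}\left(\left(\frac{ic'_1}{a}\right)\right) < \frac{b'_1-1}{2}$, while $\frac{b'_1}{2a} \le \frac12$; together these give $T_1 \le \frac{b'_1}{2} \le \frac{a}{2}$, and likewise $T_2 \le \frac b2$ and $T_3 \le \frac c2$. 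Summing and then spending the leftover constant $-\frac12$ yields $E \le \frac{a+b+c-1}{2} < \frac{a+b+c}{2}$.

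For the lower bound I would run the same grouping in reverse: since each sawtooth value is strictly below $\frac12$, one gets $T_1 \ge \frac{b'_1}{2a} - \frac{b'_1-1}{2}$, and minimizing this linear-in-$b'_1$ quantity over $1 \le b'_1 \le a$ gives $T_1 \ge 1 - \frac a2$, with the analogous bounds for $T_2$ and $T_3$. Adding these and the constant produces $E \ge \frac52 - \frac{a+b+c}{2} > -\frac{a+b+c}{2}$, so both inequalities hold with room to spare.

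The argument is essentially term-by-term estimation, so there is no deep obstacle; the one point that genuinely needs care is \emph{strictness}. A naive chain that asks each piece to be strictly extremal at once fails precisely at the boundary case $a=b=c=1$, where all three sums are empty and the linear terms simultaneously attain their maxima (indeed there $E = 1$). Grouping the terms as above and carrying the additive $-\frac12$ along is exactly what restores the strict inequalities, and I would verify the conclusion directly in that small case as a sanity check.
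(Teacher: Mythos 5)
Your proof is correct and starts from the same place as the paper's --- the rewritten expression \eqref{AnoExp}, the ranges $1 \le b'_1 \le a$, $1 \le c'_2 \le b$, $1 \le a'_3 \le c$, and $-\tfrac12 < ((x)) < \tfrac12$ --- but your decomposition is genuinely different and, as it happens, more careful. The paper applies the triangle inequality to $\bigl|N(a,b,c;n) - \tfrac{n(n+a+b+c)}{2abc}\bigr|$, bounding the block $\bigl|-\tfrac12 + \tfrac{b'_1}{2a} + \tfrac{c'_2}{2b} + \tfrac{a'_3}{2c}\bigr|$ by $\tfrac12$ and each sawtooth sum by $\tfrac{a-1}{2}$, $\tfrac{b-1}{2}$, $\tfrac{c-1}{2}$. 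That intermediate claim is actually false in general: when $a \mid n$, $b \mid n$ and $c \mid n$ (e.g.\ $n = abc$) one has $b'_1 = a$, $c'_2 = b$, $a'_3 = c$ and the block equals $1$, not $\le \tfrac12$; the theorem survives because the corrected bound of $1$ still gives a total of $\tfrac{a+b+c}{2} - \tfrac12 < \tfrac{a+b+c}{2}$. Your pairing of each linear term with its own sawtooth sum avoids this entirely and produces the sharper two-sided estimate $\tfrac52 - \tfrac{a+b+c}{2} \le N(a,b,c;n) - \tfrac{n(n+a+b+c)}{2abc} \le \tfrac{a+b+c-1}{2}$, with strictness supplied by the leftover $-\tfrac12$ (and by the constant $\tfrac52$ on the other side) rather than by any claim that individual pieces are strictly extremal. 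One cosmetic slip: the strict inequality $-\sum_{i=1}^{b'_1-1} ((ic'_1/a)) < \tfrac{b'_1-1}{2}$ fails for the empty sum $b'_1 = 1$ (both sides are $0$), but you immediately fall back to the non-strict $T_1 \le \tfrac{b'_1}{2}$, which is all you use, so nothing breaks.
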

 
 \begin{proof}
 
 Using the expression for $N(a,b,c;n)$ in \eqref{AnoExp}, we have
 
 \begin{align*}
 \left|N(a,b,c;n) - \frac{n(n+a+b+c)}{2abc}\right| &\leq \left|- \frac{1}{2} + \left( \frac{b'_1}{2a} + \frac{c'_2}{2b} + \frac{a'_3}{2c} \right)\right| \\
 &+ \left|\sum_{i=1}^{b'_1-1} \left( \left( \frac{ic'_1}{a} \right) \right)\right| + \left|\sum_{i=1}^{c'_2-1} \left( \left( \frac{ia'_2}{b} \right) \right)\right| + \left|\sum_{i=1}^{a'_3-1} \left( \left( \frac{ib'_3}{c} \right) \right)\right| \\
 &\leq \frac{1}{2} + \frac{a-1}{2} + \frac{b-1}{2} + \frac{c-1}{2} \\
 &< \frac{a+b+c}{2}.
\end{align*}

This completes the proof of Theorem \ref{Bounds}.

\end{proof}

\section{An application to $R_k(a,b,c)$}

For brevity of notation, we set $\alpha = \frac{a+b+c}{2}$ and $\beta = 2abc$. Thus, the bounds in Theorem \ref{Bounds} can be rewritten as 
\begin{equation}
\label{Rebound}
 \frac{n(n+2\alpha)}{\beta} - \alpha < N(a,b,c;n) < \frac{n(n+2\alpha)}{\beta} + \alpha. 
 \end{equation}

These bounds help us to solve a special case of a problem discussed by Bardomero and Beck in \cite{BB} and studied further in \cite{Woods}. We discuss the problems here in the context of three variables, though they are defined for any number of variables. Let $a,b$ and $c$ be given positive integers such that $\gcd(a,b,c)=1$. Let $R_k(a,b,c)$ consists of all integers $n$ such that the equation $ax+by+cz=n$ have exactly $k$ solutions. Then, Bardomero and Beck \cite{BB} suggested the questions of finding the largest number $g_k(a,b,c)$ in $R_k(a,b,c)$ and the cardinality of $|R_k(a,b,c)|$. In general, let $f(t)$ denotes the number of solutions of $ax+by+cz=t$. Then, Woods \cite{Woods} defined the following quantities. 

\begin{itemize}
\item $g_{=k}(a,b,c)$ is the maximum $t$ such that $f(t) = k$. 
\item $h_{=k}(a,b,c)$ is the minimum $t$ such that $f(t) = k$.  
\item $c_{=k}(a,b,c)$ is the number of $t$ such that $f(t) = k$.
\item $s_{=k}(a,b,c)$ is the sum of $t$ such that $f(t) = k$.
\end{itemize} 
 
 In terms of $R_k(a,b,c)$, we can rewrite these as follows. 
 \begin{align*}
 g_{=k}(a,b,c) = \max\{t: t \in R_k(a,b,c)\}, \\
 h_{=k}(a,b,c) = \min\{t: t \in R_k(a,b,c)\}, \\
c_{=k}(a,b,c) = |\{t: t \in R_k(a,b,c)\}|, \\
s_{=k}(a,b,c) = \sum\{t: t \in R_k(a,b,c)\}.
 \end{align*}
 
  Using our bounds for $N(a,b,c;n)$, we obtain expressions for these numbers when $k$ is sufficiently large. We define the following notation. 
  
  \subsection{An algorithm to find $R_k(a,b,c)$ if $a,b$ and $c$ are pairwise coprime}
  
  First, we focus on the case $\gcd(a,b) = \gcd(b,c) = \gcd(c,a) =1$. Once we are done with this case, we will study $R_k(a,b,c)$ for any natural numbers $a$, $b$ and $c$.
  
  \begin{itemize}
  \item Recall that $\alpha = \frac{a+b+c}{2}$ and $\beta = 2abc$. Then, $$ M = \left \lfloor \frac{(2\alpha\beta-1)^2 - 4\alpha^2}{4\beta} + \alpha \right \rfloor + 1, $$
  \item  For any natural number $k$, define $$\gamma_k = \left \lfloor \sqrt{\beta(k+\alpha)+\alpha^2} - \alpha \right \rfloor$$ and $$\delta_k = \left \lfloor \sqrt{\beta(k-\alpha)+\alpha^2} - \alpha \right \rfloor. $$ 
  \end{itemize}
  
  \begin{theorem}
  \label{R_k}
   Let $a$, $b$ and $c$ be given positive integers such that $\gcd(a,b) = \gcd(b,c) = \gcd(c,a) =1$. Suppose $k \geq M$. If $\gamma_k = \delta_k$, then $R_k(a,b,c) = \emptyset$. Otherwise, suppose $\gamma_k \neq \delta_k$. Then, $R_k(a,b,c) = \{\gamma_k\}$ or $R_k(a,b,c) = \emptyset$, depending on whether $$ ax+by+cz = \gamma_k $$ has exactly $k$ solutions or not.
  \end{theorem}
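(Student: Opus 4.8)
The plan is to use the quadratic $Q(n):=\frac{n(n+2\alpha)}{\beta}=\frac{(n+\alpha)^2-\alpha^2}{\beta}$ as a proxy for $N(a,b,c;n)$, since the bound \eqref{Rebound} says exactly that $|N(a,b,c;n)-Q(n)|<\alpha$. Writing $N(n)$ for $N(a,b,c;n)$, I would first note that $Q$ is strictly increasing on $[0,\infty)$ and invert the bound: if $N(n)=k$ then $Q(n)-\alpha<k<Q(n)+\alpha$, i.e.\ $Q(n)\in(k-\alpha,k+\alpha)$. Solving $Q(x)=k-\alpha$ and $Q(x)=k+\alpha$ produces precisely the numbers $L:=\sqrt{\beta(k-\alpha)+\alpha^2}-\alpha$ and $U:=\sqrt{\beta(k+\alpha)+\alpha^2}-\alpha$, so that $N(n)=k$ forces $L<n<U$; note that $\delta_k=\lfloor L\rfloor$, $\gamma_k=\lfloor U\rfloor$, $Q(L)=k-\alpha$, and $Q(U)=k+\alpha$.

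Next I would establish two one-sided inclusions. For $n\ge\gamma_k+1>U$, monotonicity gives $Q(n)>Q(U)=k+\alpha$, hence $N(n)>Q(n)-\alpha>k$. For the lower side, once I know $\gamma_k-1\le L$ I get, for every $n\le\gamma_k-1$, that $Q(n)\le Q(L)=k-\alpha$ and therefore $N(n)<Q(n)+\alpha\le k$. Combined, these two facts squeeze the candidate set down to a single point, $R_k\subseteq\{\gamma_k\}$, and neither of them needs any hypothesis on $k$.

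The crux is the lower inclusion, which rests on $\gamma_k-1\le L$; since $\gamma_k\le U$ it suffices to prove $U-L<1$, and this is exactly where $k\ge M$ enters. Rationalizing the difference of square roots gives $U-L=\frac{2\alpha\beta}{(U+\alpha)+(L+\alpha)}$, so $U-L<1$ is equivalent to $\sqrt{\beta(k+\alpha)+\alpha^2}+\sqrt{\beta(k-\alpha)+\alpha^2}>2\alpha\beta$. I would prove the stronger pair of bounds $\sqrt{\beta(k-\alpha)+\alpha^2}>\alpha\beta-\tfrac12$ and $\sqrt{\beta(k+\alpha)+\alpha^2}>\alpha\beta+\tfrac12$, whose sum already exceeds $2\alpha\beta$; after squaring, each of them reduces to the same inequality $\beta k>\alpha^2(\beta^2-1)+\tfrac14$. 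A short simplification shows that the quantity inside the floor in the definition of $M$ equals $\alpha^2\beta-\frac{\alpha^2}{\beta}+\frac{1}{4\beta}$, so $k\ge M$ gives $k>\alpha^2\beta-\frac{\alpha^2}{\beta}+\frac{1}{4\beta}$, i.e.\ $\beta k>\alpha^2(\beta^2-1)+\tfrac14$, which is precisely what is needed.

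Finally I would read off the dichotomy from $R_k\subseteq\{\gamma_k\}$. If $\gamma_k=\delta_k$ then $\gamma_k=\lfloor L\rfloor\le L$, so $Q(\gamma_k)\le k-\alpha$ and $N(\gamma_k)<k$; hence $\gamma_k\notin R_k$ and $R_k=\emptyset$. If $\gamma_k\ne\delta_k$ then $\gamma_k>L$, so $\gamma_k$ is a genuine candidate and $R_k=\{\gamma_k\}$ or $R_k=\emptyset$ according to whether $N(\gamma_k)=k$, i.e.\ whether $ax+by+cz=\gamma_k$ has exactly $k$ solutions. I expect the main obstacle to be the calibration in the third paragraph: verifying that the closed form for $M$ is exactly the threshold forcing $U-L<1$, and checking the borderline cases where $L$ or $U$ happens to be an integer. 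These are absorbed by arranging the inclusions with the inequalities as above, together with the automatic fact that $Q(U)=k+\alpha$ already makes $N(U)>k$ whenever $U$ is itself an integer.
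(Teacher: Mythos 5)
Your proposal is correct and follows essentially the same route as the paper: both confine any $t$ with $f(t)=k$ to the open interval $\bigl(\sqrt{\beta(k-\alpha)+\alpha^2}-\alpha,\ \sqrt{\beta(k+\alpha)+\alpha^2}-\alpha\bigr)$ via the bounds \eqref{Rebound} and then use $k\ge M$ to show this interval has length less than $1$, leaving $\gamma_k$ as the only candidate (your key inequality $\sqrt{\beta(k-\alpha)+\alpha^2}>\alpha\beta-\tfrac12$ is exactly the paper's step $2\alpha\beta-1<2\sqrt{\beta(k-\alpha)+\alpha^2}$, just completed differently). Your treatment is slightly more careful than the paper's about the borderline cases where an endpoint is an integer, but this is a refinement of the same argument, not a different one.
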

  
  \begin{proof}
  
  Suppose $f(t)=k$. Then, by the above bounds in \eqref{Rebound}, we get that $$ k- \alpha < \frac{t(t+2\alpha)}{\beta} < k+ \alpha.$$ Equivalently, $$ \beta(k-\alpha)+\alpha^2 < (t+\alpha)^2 < \beta(k+\alpha)+\alpha^2. $$ That is, 
  \begin{equation}
  \label{gammadelta}
   \sqrt{\beta(k-\alpha)+\alpha^2} - \alpha < t < \sqrt{\beta(k+\alpha)+\alpha^2} - \alpha. 
   \end{equation}
    Thus, if $\gamma_k = \delta_k$, then there is no possible value of $t$ and $R_k(a,b,c) = \emptyset$. 
 
  Next suppose $\gamma_k \neq \delta_k$. We show that if $k \geq M$, then the left hand side and the right hand side of \eqref{gammadelta} differ by less than $1$, and thus $t$ can be determined from this inequality. Since $k \geq M$, we have $$ k > \frac{(2\alpha\beta-1)^2 - 4\alpha^2}{4\beta} + \alpha.$$ Therefore, $$ 2\alpha\beta-1 <  2\sqrt{\beta(k-\alpha)+\alpha^2}. $$ Thus, $$ (\beta(k+\alpha)+\alpha^2) < (\beta(k-\alpha)+\alpha^2) + 1 + 2\sqrt{\beta(k-\alpha)+\alpha^2}. $$  That is, 
 
  \begin{equation}
  \label{Help}
   \sqrt{\beta(k+\alpha)+\alpha^2}  <  \sqrt{\beta(k-\alpha)+\alpha^2} + 1. 
   \end{equation}
   
   From \eqref{gammadelta} and \eqref{Help}, it follows that there can be at most one possible value of $t$ and that is $\gamma_k$. This completes the proof of the theorem.
     
  \end{proof}
 
 Let $a$, $b$ and $c$ be given positive integers such that $\gcd(a,b) = \gcd(b,c) = \gcd(c,a) =1$. We say that a number $k$ is of \emph{category I with respect to $a,b$ and $c$} if $\gamma_k \neq \delta_k$ and the equation $$ ax+by+cz = \gamma_k$$ has exactly $k$ solutions. Otherwise, we say that $k$ is of \emph{category II with respect to $a,b$ and $c$}. When there is no confusion about $a,b$ and $c$, we just say that $k$ is of category I or $k$ is of category II. 
 
 Thus, if $k \geq M$ is of category I, then by Theorem \ref{R_k}, $R_k(a,b,c) = \left\{\gamma_k \right\}$.  Thus, $c_{=k}(a,b,c) = 1$ and $$ g_{=k}(a,b,c) =  h_{=k}(a,b,c) = s_{=k}(a,b,c) = \gamma_k.$$ Otherwise, if $k \geq M$ is of category II, then by Theorem \ref{R_k}, $R_k(a,b,c) = \emptyset$. Thus, $c_{=k}(a,b,c) = 0$. Also, by the convention that empty sum is $0$, we get $s_{=k}(a,b,c) =0$. However, in this case, $g_{=k}(a,b,c)$ and  $h_{=k}(a,b,c)$ are not defined.
 
 Thus, for any $k \geq M$, the problem of finding $R_k(a,b,c)$, $g_{=k}(a,b,c)$,  $h_{=k}(a,b,c)$, $c_{=k}(a,b,c)$ and $s_{=k}(a,b,c)$ reduces to determining the category of $k$. For that, we need to find the number of solutions of the equation $$ ax+by+cz= \gamma_k. $$ However, we can easily do that using the algorithm described in \cite[Section 2.3]{Binner}.
  
We summarize our algorithm for finding the quantities $R_k(a,b,c)$, $g_{=k}(a,b,c)$, $h_{=k}(a,b,c)$, $c_{=k}(a,b,c)$ and $s_{=k}(a,b,c)$ for given positive numbers $a,b$ and $c$ with $\gcd(a,b) = \gcd(b,c) = \gcd(c,a) =1$, and given $k \geq M$, where $$ M = \left \lfloor \frac{(2\alpha\beta-1)^2 - \alpha^2}{\beta} + \alpha \right \rfloor + 1, $$ where $\alpha = \frac{a+b+c}{2}$ and $\beta = {2abc}$.
 
 \begin{enumerate}
 \item Evaluate the quantities $\gamma_k = \left \lfloor \sqrt{\beta(k+\alpha)+\alpha^2} - \alpha \right \rfloor$ and $\delta_k = \left \lfloor \sqrt{\beta(k-\alpha)+\alpha^2} - \alpha \right \rfloor$. 
 \item Determine the category of $k$. We do that in two steps. If $\gamma_k = \delta_k$, then $k$ is of category II. Otherwise, find the number of solutions of $$ ax+by+cz =  \gamma_k $$ using the algorithm described in \cite[Section 2.3]{Binner}. If the number of solutions equals $k$, then $k$ is of category I. Otherwise $k$ is of category II.
 \item If $k$ is of category I, then  $R_k(a,b,c) = \left\{\gamma_k \right\}$, $c_{=k}(a,b,c) = 1$ and 
  $$ g_{=k}(a,b,c) =  h_{=k}(a,b,c) = s_{=k}(a,b,c) = \gamma_k.$$
  \item If $k$ is of category II, then $R_k(a,b,c) = \emptyset$, and $$c_{=k}(a,b,c) = s_{=k}(a,b,c) =0.$$ However, in this case, $g_{=k}(a,b,c)$ and  $h_{=k}(a,b,c)$ are not defined.
  
 \end{enumerate}
 
 \subsection{An example}
 
 We demonstrate our algorithm for an example. Let $a=37$, $b=23$ and $c=16$. Then $\alpha = 38$, $\beta = 27232$ and $M = 157291918$. We illustrate our example for three values of $k$. First suppose  $k=157295111$. Then, $$ \gamma_k = \delta_k = 2069614. $$ Thus, by the second step of the algorithm, $k$ is of category II, and then by the fourth step, $R_k(37,23,16) = \emptyset$ for $k=157295111$.  Thus, in this case $$c_{=k}(37,23,16) = s_{=k}(37,23,16) = 0. $$ However, in this case, $g_{=k}(37,23,16)$ and  $h_{=k}(37,23,16)$ are not defined.
 
 Next, suppose $k=157295072$. In this case, $\gamma_k = 2069614$ and $\delta_k = 2069613$. Thus, $ \gamma_k \neq \delta_k$. Therefore, we need to find the number of solutions $$ 37x + 23y + 16z = 2069614. $$ Using the formula in Theorem \ref{MainThm}, we find that the number of solutions of $ 37x + 23y + 16z =  2069614$ is given by $$ 157295066 + \sum_{i=1}^{3} \left \lfloor \frac{13i}{37} \right \rfloor + \sum_{i=1}^{3} \left \lfloor \frac{11i}{23} \right \rfloor + \sum_{i=1}^{9} \left \lfloor \frac{3i}{16} \right \rfloor. $$ Using the algorithm in \cite[Section 2.3]{Binner} (Basically reciprocity relation of \cite[Lemma 7]{Binner}), we easily get that 
 
\begin{equation}
 \begin{aligned}
 \sum_{i=1}^{3} \left \lfloor \frac{13i}{37} \right \rfloor &= 1, \\
 \sum_{i=1}^{3} \left \lfloor \frac{11i}{23} \right \rfloor &= 1, \\
\sum_{i=1}^{9} \left \lfloor \frac{3i}{16} \right \rfloor &= 4.
 \end{aligned}
\end{equation}

Thus, the number of solutions of $ 37x + 23y + 16z = 2069614$ is equal to $157295072$. Therefore, $k= 157295072$ is of category I, and we get that in this case $R_k(37,23,16) = \{2069614\}$. Thus, $c_{=k}(37,23,16) = 1$ and 
  $$ g_{=k}(37,23,16) =  h_{=k}(37,23,16) = s_{=k}(37,23,16) = 2069614.$$ 
  
Finally, we consider $k=157294925$. In this case, $\gamma_k = 2069613$ and $\delta_k = 2069612$. Thus, $ \gamma_k \neq \delta_k$. Therefore, we need to find the number of solutions $$ 37x + 23y + 16z = 2069613. $$ Using the formula in Theorem \ref{MainThm}, we find that the number of solutions of $ 37x + 23y + 16z =  2069613$ is given by $$ 157294695 + \sum_{i=1}^{32} \left \lfloor \frac{13i}{37} \right \rfloor + \sum_{i=1}^{16} \left \lfloor \frac{11i}{23} \right \rfloor + \sum_{i=1}^{6} \left \lfloor \frac{3i}{16} \right \rfloor. $$ Using the algorithm in \cite[Section 2.3]{Binner} (Basically reciprocity relation of \cite[Lemma 7]{Binner}), we easily get that 
 
\begin{equation}
 \begin{aligned}
 \sum_{i=1}^{32} \left \lfloor \frac{13i}{37} \right \rfloor &= 170, \\
 \sum_{i=1}^{16} \left \lfloor \frac{11i}{23} \right \rfloor &= 56, \\
\sum_{i=1}^{6} \left \lfloor \frac{3i}{16} \right \rfloor &= 1.
 \end{aligned}
\end{equation}

Thus, the number of solutions of $ 37x + 23y + 16z = 2069613$ is equal to $157294920$. Therefore, $k= 157294925$ is of category II, and we get that in this case $R_k(37,23,16) = \emptyset$. Thus, $c_{=k}(37,23,16) = s_{=k}(37,23,16) = 0$. However, in this case, $g_{=k}(37,23,16)$ and  $h_{=k}(37,23,16)$ are not defined.

\subsection{$R_k(a,b,c)$ for any natural numbers $a,b$ and $c$}

Next suppose $a, b$ and $c$ are any natural numbers with $\gcd(a,b,c) = 1$ (need not be pairwise coprime). We handle this case using the technique of reduction to an equation with pairwise coprime coefficients, described in \cite[Lemma 3]{Binner}. We restate that result here. For that, we recall the following notation.

\begin{itemize}
\item Let $g_1$, $g_2$, and $g_3$ denote $\gcd(b, c)$,  $\gcd(c, a)$, and $\gcd(a, b)$, respectively. Note that $\gcd(g_1, g_2) = \gcd(g_2, g_3) = \gcd(g_3, g_1) = 1$. 
\item Let $a_1$, $b_2$, and $c_3$ denote the modular inverses of $a$ with respect to the modulus $g_1$,  $b$  with respect to the modulus $g_2$, and $c$ with respect to the modulus $g_3$, respectively.
\item Let $n_1$, $n_2$, and $n_3$ denote the remainders upon dividing $na_1$ by $g_1$, $nb_2$ by $g_2$, and $nc_3$ by $g_3$, respectively.
\item Let $A =  \frac{a}{g_2 g_3}$,  $B =  \frac{b}{g_3 g_1}$, and $C =   \frac{c}{g_1 g_2}$. Note that $\gcd(A,B)=\gcd(B,C)=\gcd(A,C)=1$.
\item Let $N =  \frac{n - an_1 - bn_2 - cn_3}{g_1 g_2 g_3}$.
\end{itemize}

\begin{lemma}(B.(2020))
\label{Reduction}
With the notation above, the number of solutions of the equation $ax + by + cz = n$ in nonnegative integer triples $(x,y,z)$ is equal to the number of solutions of the equation $Ax + By + Cz = N$ in nonnegative integer triples $(x,y,z)$.
\end{lemma}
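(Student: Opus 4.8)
The plan is to exhibit an explicit bijection between the nonnegative integer solutions of $ax+by+cz=n$ and those of $Ax+By+Cz=N$, built from the substitution that shifts each coordinate by its forced residue and then rescales. First I would record the arithmetic setup: since $\gcd(a,b,c)=1$, no prime divides two of $g_1,g_2,g_3$, so these three are pairwise coprime; moreover $g_2$ and $g_3$ both divide $a$ and are coprime, so $g_2 g_3 \mid a$ and $A = a/(g_2 g_3)$ is a genuine positive integer, giving $a = A g_2 g_3$ and symmetrically $b = B g_3 g_1$, $c = C g_1 g_2$. The same coprimality forces $\gcd(a,g_1)=\gcd(b,g_2)=\gcd(c,g_3)=1$, so the inverses $a_1,b_2,c_3$ and hence the residues $n_1,n_2,n_3$ are well defined.

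The heart of the argument is that every coordinate of every solution is pinned down modulo the corresponding $g_i$. Reducing $ax+by+cz=n$ modulo $g_1=\gcd(b,c)$ kills the $by$ and $cz$ terms and leaves $ax \equiv n \pmod{g_1}$; multiplying by $a_1$ gives $x \equiv n_1 \pmod{g_1}$. Reducing modulo $g_2$ and $g_3$ gives $y \equiv n_2 \pmod{g_2}$ and $z \equiv n_3 \pmod{g_3}$ in the same way. Since $0 \le n_1 < g_1$ and $x \ge 0$, the condition $x \equiv n_1 \pmod{g_1}$ is equivalent to writing $x = n_1 + g_1 x'$ with $x' \ge 0$ an integer, and likewise $y = n_2 + g_2 y'$ and $z = n_3 + g_3 z'$.

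Next I would substitute these into $ax+by+cz=n$. The constant part collects into $a n_1 + b n_2 + c n_3$, and using $a g_1 = A g_1 g_2 g_3$ and its analogues the variable part becomes $g_1 g_2 g_3\,(A x' + B y' + C z')$; dividing the resulting identity by $g_1 g_2 g_3$ yields exactly $A x' + B y' + C z' = N$. A short check confirms that $N$ is an integer: modulo $g_1$ every term of $n - a n_1 - b n_2 - c n_3$ except $n - a n_1$ vanishes, and $a n_1 \equiv a\,(n a_1) \equiv n \pmod{g_1}$; the analogous computations modulo $g_2$ and $g_3$, together with pairwise coprimality of $g_1,g_2,g_3$, give $g_1 g_2 g_3 \mid n - a n_1 - b n_2 - c n_3$.

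Finally I would note that the map $(x,y,z) \mapsto (x',y',z')$ and its inverse $(x',y',z') \mapsto (n_1 + g_1 x',\, n_2 + g_2 y',\, n_3 + g_3 z')$ are mutually inverse, preserve nonnegativity in both directions, and send solutions to solutions, so they restrict to a bijection between the two solution sets; equality of the two counts follows immediately. The only genuinely delicate point is the bookkeeping that the forced residues $n_1,n_2,n_3$ are precisely what makes both the rescaled variables nonnegative integers and the right-hand side $N$ an integer — everything else is a direct substitution.
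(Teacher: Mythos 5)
Your proof is correct and complete: the residue-forcing step ($x \equiv n_1 \pmod{g_1}$, etc.), the substitution $x = n_1 + g_1 x'$, and the divisibility check for $N$ are all justified, and together they give the claimed bijection. Note that this paper does not actually prove the lemma --- it only restates \cite[Lemma 3]{Binner} --- but your argument is precisely the standard reduction used in that source, so there is nothing further to compare.
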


In particular, note that $ax + by + cz = n$ has $k$ solutions if and only if $Ax + By + Cz = N$ has $k$ solutions. 

\begin{itemize}
  \item Let $\alpha' = \frac{A+B+C}{2}$ and $\beta' = 2ABC$, and $$ M' = \left \lfloor \frac{(2\alpha'\beta'-1)^2 - 4\alpha'^2}{4\beta'} + \alpha' \right \rfloor + 1. $$
  \item  For any natural number $k$, define $$\gamma'_k = \left \lfloor \sqrt{\beta'(k+\alpha')+\alpha'^2} - \alpha' \right \rfloor$$ and $$\delta'_k = \left \lfloor \sqrt{\beta'(k-\alpha')+\alpha'^2} - \alpha' \right \rfloor. $$ 
  \end{itemize}
  
  Suppose $k \geq M'$ be of category II with respect to $A, B$ and $C$, then there is no value of $N$ such that $Ax + By + Cz = N$ has $k$ solutions. Thus, there is no value of $n$ such that $ax + by + cz = n$ has $k$ solutions. Thus, $R_k(a,b,c) = \emptyset$, and $$c_{=k}(a,b,c) = s_{=k}(a,b,c) =0.$$  In this case, $g_{=k}(a,b,c)$ and  $h_{=k}(a,b,c)$ are not defined.
  
  Suppose $k \geq M'$ be of category I with respect to $A, B$ and $C$, then $R_k(A,B,C) = \gamma'_k$. That is, $Ax + By + Cz = N$ has $k$ solutions if and only if $N = \gamma'_k$. That is $$ \frac{n - an_1 - bn_2 - cn_3}{g_1 g_2 g_3} = \gamma'_k. $$ Thus, $n$ lies in the set $$ \left\{g_1 g_2 g_3 \gamma'_k + ai_1 + bi_2 + ci_3: 0 \leq i_1 \leq g_1 - 1, 0 \leq i_2 \leq g_2 - 1, 0 \leq i_3 \leq g_3 - 1 \right\}. $$ Conversely, it is easy to see that if $n$ is a member of this set, then $N = \gamma'_k$, and thus $Ax+By+Cz=N$ has $k$ solutions, and therefore $ax+by+cz=n$ also has $k$ solutions. Thus, we get that $$ R_k(a,b,c) = \left\{g_1 g_2 g_3 \gamma'_k + ai_1 + bi_2 + ci_3: 0 \leq i_1 \leq g_1 - 1, 0 \leq i_2 \leq g_2 - 1, 0 \leq i_3 \leq g_3 - 1 \right\}. $$ Therefore, 
  
 $$ g_{=k}(a,b,c) = \max\{t: t \in R_k(a,b,c)\} = g_1 g_2 g_3 \gamma'_k + a(g_1-1) + b(g_2-1) + c(g_3-1),$$ 
 $$h_{=k}(a,b,c) = \min\{t: t \in R_k(a,b,c)\} = g_1 g_2 g_3 \gamma'_k, $$
$$c_{=k}(a,b,c) = |\{t: t \in R_k(a,b,c)\}| =  g_1 g_2 g_3. $$

Finally,

\begin{align*}
s_{=k}(a,b,c) &= \sum\{t: t \in R_k(a,b,c)\}. \\
&= \sum_{i_1=0}^{g_1-1} \sum_{i_2=0}^{g_2-1} \sum_{i_3=0}^{g_3-1} \left( g_1 g_2 g_3 \gamma'_k + ai_1 + bi_2 + ci_3 \right) \\
& = (g_1 g_2 g_3)^2 \gamma'_k + g_1 g_2 g_3\left(a(g'_1-1)+b(g'_2-1)+c(g'_3-1)\right). 
\end{align*}

 Let $a$, $b$ and $c$ be given positive integers such that $\gcd(a,b,c) = 1$. Let $g_1$, $g_2$, and $g_3$ denote $\gcd(b, c)$,  $\gcd(c, a)$, and $\gcd(a, b)$, respectively. Moreover, let $A =  \frac{a}{g_2 g_3}$,  $B =  \frac{b}{g_3 g_1}$, and $C =   \frac{c}{g_1 g_2}$. Then, we say that a number $k$ is of \emph{category I with respect to $a,b$ and $c$} if $k$ is of category I with respect to $A,B$ and $C$. Otherwise, we say that $k$ is of \emph{category II with respect to $a,b$ and $c$}. 

Finally suppose $a,b$ and $c$ be any natural numbers ($\gcd(a,b,c)$ need not be $1$). Let $\gcd(a,b,c) = g$. For $ax+by+cz=n$ to have $k \geq 1$ solutions, it is necessary that $g$ divides $n$. Let $a'' = \frac{a}{g}, b'' = \frac{b}{g}, c'' = \frac{c}{g}$, and $n'' = \frac{n}{g}.$ Note that $\gcd(a'',b'',c'')=1$.

\begin{itemize}
\item Let $g''_1$, $g''_2$, and $g''_3$ denote $\gcd(b'', c'')$,  $\gcd(c'', a'')$, and $\gcd(a'', b'')$, respectively. 
\item Let $A'' = \frac{a''}{g''_2g''_3}$, $B'' = \frac{b''}{g''_1g''_3}$ and $C'' = \frac{c''}{g''_1g''_2}$.
  \item Let $\alpha'' = \frac{A''+B''+C''}{2}$ and $\beta'' = 2A''B''C''$, and $$ M'' = \left \lfloor \frac{(2\alpha''\beta''-1)^2 - 4\alpha''^2}{4\beta''} + \alpha'' \right \rfloor + 1. $$
  \end{itemize}

Suppose $k \geq M''$. It is clear that $ax+by+cz=n$ has $k$ solutions if and only if $a''x+b''y+c''z=n''$ has $k$ solutions. That is, $n \in R_k(a,b,c)$ if and only if $n'' = \frac{n}{g} \in R_k(a'',b'',c'')$. Therefore, 
\begin{equation}
\label{Related}
 R_k(a,b,c) = \{gt: t \in R_k(a'',b'',c'')\}, 
 \end{equation}
  where $R_k(a'',b'',c'')$ can be calculated from the formulas described above (since $\gcd(a'',b'',c'')=1$). Note that if $k$ is of Category II with respect to $a'', b''$ and $c''$, then $R_k(a'',b'',c'') = \emptyset$, and therefore $R_k(a,b,c) = \emptyset$. Thus, $$ c_{=k}(a,b,c) = s_{=k}(a,b,c) = 0,$$ and the quantities $g_{=k}(a,b,c)$ and $h_{=k}(a,b,c)$ are not defined. If $k$ is of Category I with respect to $a'', b''$ and $c''$, then from \eqref{Related}, we have 
  $$ g_{=k}(a,b,c) = gg_{=k}(a'',b'',c''), $$
   $$h_{=k}(a,b,c) =gh_{=k}(a'',b'',c''), $$
   $$c_{=k}(a,b,c) =c_{=k}(a'',b'',c''), $$
   $$ s_{=k}(a,b,c) =gs_{=k}(a'',b'',c''), $$
   where $g_{=k}(a'',b'',c''), h_{=k}(a'',b'',c''), c_{=k}(a'',b'',c'')$, and $s_{=k}(a'',b'',c'')$ can be calculated from the formulas described above (since $\gcd(a'',b'',c'')=1$).

  \section{An application to a recent conjecture}

Next, we use the bounds in Theorem \ref{Bounds} to disprove a recent conjecture of He, Shiue and Venkat in \cite{Shiue}. First, we describe some notation required to state their conjecture. Let $a,b,c$ and $n$ be given positive integers with $\gcd(a,b,c)=1$. 

\begin{itemize}
\item $\hat{S_1}$ denotes the set of non-negative integer solutions $(x,y,z)$ of $ax+by+cz=n$ such that $x=0$.
\item $\hat{S_2}$ denotes the set of non-negative integer solutions $(x,y,z)$ of $ax+by+cz=n$ such that $y=0$.
\item $\hat{S_3}$ denotes the set of non-negative integer solutions $(x,y,z)$ of $ax+by+cz=n$ such that $z=0$.
\end{itemize}

\begin{conjecture}(He, Shiue and Venkat (2021))
For any solution $(\hat{x},\hat{y},\hat{z})$ of $ax+by+cz=n$, there exist $s_i \in \hat{S_i}$ such that $$ (\hat{x},\hat{y},\hat{z}) = s_1 - s_2 + s_3. $$
\end{conjecture}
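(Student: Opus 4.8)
The plan is to disprove the conjecture by exhibiting an explicit solution that admits no decomposition of the asserted form, after first isolating exactly when such a decomposition exists. First I would write the three boundary solutions as $s_1 = (0, y_1, z_1)$, $s_2 = (x_2, 0, z_2)$ and $s_3 = (x_3, y_3, 0)$, so that the candidate identity $(\hat{x}, \hat{y}, \hat{z}) = s_1 - s_2 + s_3$ becomes the three scalar equations $\hat{x} = x_3 - x_2$, $\hat{y} = y_1 + y_3$ and $\hat{z} = z_1 - z_2$. The essential structural feature is the asymmetry of the sign pattern: the $y$-coordinate appears as a \emph{sum} $y_1 + y_3$ of two nonnegative integers, whereas the $x$- and $z$-coordinates appear as differences. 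Thus $\hat{y}$ must be split as $y_1 + y_3$ with both parts nonnegative, and this is the only genuinely restrictive requirement.

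Next I would reduce the existence of a decomposition to a single congruence-plus-range condition on the splitting parameter $y_3$. Writing $y_1 = \hat{y} - y_3$, the memberships $s_3 \in \hat{S_3}$ and $s_1 \in \hat{S_1}$ force $x_3 = (n - b y_3)/a$ and $z_1 = (n - b y_1)/c$ to be nonnegative integers, which (since $\gcd(a,c)=1$) is equivalent by the Chinese Remainder Theorem to $y_3$ lying in one residue class modulo $ac$, namely $y_3 \equiv n b^{-1} \pmod{a}$ and $y_3 \equiv \hat{y} - n b^{-1} \pmod{c}$. The remaining inequalities $x_2, z_2 \ge 0$ (with $x_2 = x_3 - \hat{x}$ and $z_2 = z_1 - \hat{z}$), together with $y_1, y_3 \ge 0$, all collapse---after substituting the plane relation $a\hat{x} + b\hat{y} + c\hat{z} = n$---to the single range condition $0 \le y_3 \le \hat{y}$. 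Hence a decomposition exists if and only if the residue class determined above has a representative in $[0, \hat{y}]$.

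With this characterization in hand, the conjecture fails precisely when one can find a genuine solution whose $\hat{y}$ is strictly smaller than the least nonnegative representative of that residue class modulo $ac$; in particular any solution with $\hat{y} = 0$ forces $a \mid n$ and $c \mid n$, so it is non-decomposable as soon as one of these fails. I would therefore present a concrete counterexample with all three boundary sets nonempty, so that the failure is structural rather than a triviality; for instance, with $(a,b,c,n) = (2,3,5,13)$ the solution $(0,1,2)$ cannot be written as $s_1 - s_2 + s_3$, even though $\hat{S_1}, \hat{S_2}$ and $\hat{S_3}$ are all nonempty. Here Theorem~\ref{Bounds} and the formula of Theorem~\ref{MainThm} serve to certify the relevant solution counts; combined with the solvability of the two-variable equation $ax + cz = n$ for large $n$, they ensure that boundary solutions with $\hat{y} = 0$ (hence non-decomposable whenever $a \nmid n$) exist for infinitely many $n$, so the failure is not sporadic.

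The main obstacle I anticipate is establishing non-decomposability rigorously, that is, showing the target truly lies outside the image of the map $(s_1, s_2, s_3) \mapsto s_1 - s_2 + s_3$. The delicate point is verifying that every inequality except $0 \le y_3 \le \hat{y}$ is automatically satisfied; this requires carefully combining the six nonnegativity constraints with the plane equation and checking that the binding lower and upper limits for $y_3$ really do reduce to $0$ and $\hat{y}$. Once this collapse is confirmed, non-decomposability of a specific solution is immediate from a residue computation modulo $ac$, and for the explicit numerical counterexample it can be double-checked by finite enumeration of $\hat{S_1} \times \hat{S_2} \times \hat{S_3}$.
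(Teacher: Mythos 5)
Your argument is correct and does disprove the conjecture, but it is a genuinely different disproof from the paper's. The paper never analyzes decomposability directly: it attacks the stated \emph{consequence} of the conjecture, namely $0 \le N(a,b,c;n) \le 3\binom{\hat{N}}{3}$, by taking $a=191$, $b=131$, $c=117$, $n=67529$, computing $N_1=4$, $N_2=3$, $N_3=3$ (so the consequence would force $N \le 360$), and then invoking the bounds of Theorem~\ref{Bounds} to get $N \ge 565$; the whole point of that route is to showcase Theorem~\ref{Bounds}. Your route is more elementary and more informative. The reduction of $(\hat{x},\hat{y},\hat{z})=s_1-s_2+s_3$ to the existence of an integer $y_3\in[0,\hat{y}]$ in a prescribed residue class modulo $ac$ is correct: the collapse you worried about does go through, since with $y_1=\hat{y}-y_3$, $x_3=(n-by_3)/a$ and $z_1=(n-by_1)/c$ one gets $x_2=(by_1+c\hat{z})/a\ge 0$ and $z_2=(a\hat{x}+by_3)/c\ge 0$ automatically, with $ax_2+cz_2=n$. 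Your example also checks out: for $2x+3y+5z=13$ one has $\hat{S_1}=\{(0,1,2)\}$, $\hat{S_2}=\{(4,0,1)\}$, $\hat{S_3}=\{(5,1,0),(2,3,0)\}$, so $s_1-s_2+s_3$ can only be $(1,2,1)$ or $(-2,4,1)$, and $(0,1,2)$ is unreachable. Three caveats. First, your witness $(0,1,2)$ lies in $\hat{S_1}$ itself; the conjecture as stated does cover boundary solutions, but a sceptic could retreat to interior solutions, and your own criterion supplies interior counterexamples too (e.g.\ $(5,1,2)$ for $2x+3y+5z=23$, where the admissible class is $y_3\equiv 5\pmod{10}$ while $\hat{y}=1$), so it is worth choosing one. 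Second, the Chinese Remainder step uses pairwise coprimality, which your example satisfies but the conjecture's hypothesis $\gcd(a,b,c)=1$ does not guarantee; for exhibiting a single counterexample this is harmless. Third, Theorems~\ref{MainThm} and~\ref{Bounds} are not actually needed anywhere in your argument --- finite enumeration settles everything --- whereas they are the engine of the paper's disproof; what you lose in connection to the paper's theme you gain in a counterexample small enough to verify by hand, plus an exact decomposability criterion that the paper does not provide.
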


They verified this conjecture for some basic examples. Moreover, they also gave the following interesting consequence of this conjecture, if it is true.

\vspace{.5cm}
\textbf{Consequence of the conjecture:} Let $N_i$ denote $|\hat{S_i}|$, and $\hat{N} = N_1 + N_2 + N_3$. Then $$ 0 \leq N(a,b,c;n) \leq 3 \binom{\hat{N}}{3}.  $$

We give a counterexample to this consequence of the conjecture, which in turn will disprove the conjecture. Choose $a=191$, $b=131$, $c=117$, and $n=67529$. Then, using the formula given in \cite[Corollary 17]{Binner}, $N_1 = 4$, $N_2 = 3$ and $N_3 = 3$. This method was also described independently in \cite{Ab}. Equivalently, we might calculate $N_1, N_2$ and $N_3$ using the formula given in \cite{AT}. Thus, $\hat{N} = 10$. Therefore, assuming the conjecture is true, the number of solutions of $$ 191x+131y+117z = 67529 $$ should be less than or equal to $3 \binom{10}{3} = 360$. However, by our bounds in Theorem \ref{Bounds}, we have $$ 565 \leq N(191,131,117;67529) \leq 1003, $$ giving the required contradiction. For further studies, it may be interesting to see if this conjecture or its consequence hold true for some special families of values of $a$, $b$ and $c$. 

\section{Acknowledgement}

I wish to thank the Maths Department at SFU for providing me various awards and fellowships which help me conduct my research.

\end{document}